\numberwithin{equation}{section}
\theoremstyle{definition}\newtheorem{definition}{Definition}
\theoremstyle{plain}\newtheorem{theorem}[definition]{Theorem}
\theoremstyle{plain}
\theoremstyle{plain}
\theoremstyle{plain}\newtheorem{lemma}[definition]{Lemma}
\theoremstyle{definition}\newtheorem{assumption}[definition]{Assumption}
\theoremstyle{definition}
\theoremstyle{definition}
\theoremstyle{definition}\newtheorem{remark}[definition]{Remark}
\newcommand{\cO}{{\mathcal O}}
\newcommand{\bbR}{\mathbb{R}}
\newcommand{\la}{\langle}
\newcommand{\ra}{\rangle}
\begin{document}

\title{A converse result for Banach space convergence rates in Tikhonov-type convex regularization of ill-posed linear equations}

\author{
\textsc{Jens Flemming}%
\footnote{Chemnitz University of Technology,
Faculty of Mathematics, D-09107 Chemnitz, Germany,
jens.flemming@mathematik.tu-chemnitz.de.}
}

\date{\today\\~\\
\small\textbf{Key words:} ill-posed problem, convergence rates, variational source condition, converse result, linear equation, Banach space\\
~\\
\textbf{MSC:} 65J22, 47A52}

\maketitle

\begin{abstract}
We consider Tikhonov-type variational regularization of ill-posed linear operator equations in Banach spaces with general convex penalty functionals.
Upper bounds for certain error measures expressing the distance between exact and regularized solutions, especially for Bregman distances, can be obtained from variational source conditions.
We prove that such bounds are optimal in case of twisted Bregman distances, that is, the rate function is also an asymptotic lower bound for the error measure. This result extends existing converse results from Hilbert space settings to Banach spaces without adhering to spectral theory.
\end{abstract}

\section{Setting}

We look at ill-posed linear operator equations
\begin{equation}\label{eq:the}
A\,x=y^\dagger,\quad x\in X,
\end{equation}
in Banach spaces $X$ and $Y$, where $A:X\rightarrow Y$ is a bounded linear operator and the exact right-hand side $y^\dagger\in Y$ is only accessible through a noisy measurment $y^\delta\in Y$ satisfying
\begin{equation*}
\|y^\delta-y^\dagger\|\leq\delta
\end{equation*}
with noise level $\delta\geq 0$.
Here, ill-posedness refers to a discontinuous dependence of the solutions on the data, in essence.

Regularization is required and we restrict our attention to Tikhonov-type regulariztion of the form
\begin{equation}\label{eq:tikh}
T_\alpha^\delta(x):=\frac{1}{p}\,\|A\,x-y^\delta\|^p+\alpha\,\Omega(x)\to\min_{x\in X},
\end{equation}
which is quite common in Banach space regularization, see \cite[Section~3.2]{SchGraGroHalLen09} or \cite[Chapter~4]{SchKalHofKaz12} and references therein. The functional $\Omega:X\rightarrow(-\infty,+\infty]$ shall stabilize the minimization problem and the regularization parameter $\alpha>0$ controls the trade-off between data fitting and stabilization. The exponent $p>1$ can be chosen to ease numerical minimization, e.\,g., $p=2$ in Hilbert spaces.
Existence, stability and convergence of the minimizers are guaranteed by the following assumptions, see \cite[Section~3.2]{SchGraGroHalLen09} or \cite[Chapter~4]{SchKalHofKaz12}.

\begin{assumption}
We assume that the following properties are satisfied by the introduced setting.
\begin{itemize}
\item[(i)]
Equation~\eqref{eq:the} has a solution with finite $\Omega$.
\item[(ii)]
$\Omega$ is proper and convex.
\item[(iii)]
The sublevel sets $\{x\in X:\,\Omega(x)\leq c\}$, $c\in\bbR$, are weakly sequentially closed and each sequence in such a set has a weakly convergent subsequence.
\end{itemize}
\end{assumption}

\section{Convergence rates}\label{sc:rates}

Tikhonov regularized solutions converge weakly, at least in a subsequential manner, to solutions which minimize the penalty $\Omega$ in the set of all solutions to \eqref{eq:the}. By $x^\dagger$ we typically denote an $\Omega$ minimizing solution.
Convergence to $\Omega$ minimizing solutions can be arbitrarily slow and additional effort is needed to estimate the speed of convergence, at least asymptotically as $\delta$ goes to zero and $\alpha=\alpha(\delta,y^\delta)$ is chosen properly. To formulate such convergence rate results we first have to choose an error measure expressing the distance between regularized and exact solutions and then we have to look for suitable conditions guaranteeing a certain convergence speed.

There is a wide choice of error measures. In Hilbert spaces the standard choice is the squared norm
\begin{equation*}
\|x_\alpha^\delta-x^\dagger\|^2,
\end{equation*}
where $x_\alpha^\delta$ is a minimizer of \eqref{eq:tikh} and $x^\dagger$ is an $\Omega$ minimizing solution.
In Banach spaces a typical choice are Bregman distances with respect to the convex penalty $\Omega$.

\begin{definition}
Let $\bar{x}\in X$ be an element with nonempty subdifferential $\partial\Omega(\bar{x})$ and let $\bar{\xi}\in\partial\Omega(\bar{x})$ be a subgradient of $\Omega$ at $\bar{x}$. The functional \mbox{$B^\Omega_{\bar{\xi}}(\cdot,\bar{x}):X\rightarrow[0,+\infty]$} defined by
\begin{equation*}
B^\Omega_{\bar{\xi}}(x,\bar{x}):=\Omega(x)-\Omega(\bar{x})-\la\bar{\xi},x-\bar{x}\ra_{X^\ast\times X},\qquad x\in X,
\end{equation*}
is called \emph{Bregman distance} between $x$ and $\bar{x}$.
\end{definition}

Usually one uses
\begin{equation*}
B^\Omega_{\xi^\dagger}(x_\alpha^\delta,x^\dagger)
\end{equation*}
as error measure, where $\xi^\dagger\in\partial\Omega(x^\dagger)$. Note that $\partial\Omega(x^\dagger)\neq\emptyset$ is not automatically fulfilled and has to be seen as an additional assumption.
In \cite{Kin16} the skewed Bregman distance
\begin{equation*}
B^\Omega_{\xi_\alpha^\delta}(x^\dagger,x_\alpha^\delta)
\end{equation*}
has been suggested as an error measure for convergence rate results, where $\xi_\alpha^\delta\in\partial\Omega(x_\alpha^\delta)$.
From the following lemma we immediately see that \mbox{$\partial\Omega(x_\alpha^\delta)\neq\emptyset$} is automatically fulfilled.

\begin{lemma}\label{th:tikhopt}
An element $x_\alpha^\delta\in X$ is a minimizer of \eqref{eq:tikh} if and only if there is some $\eta_\alpha^\delta\in Y^\ast$ such that
\begin{equation*}
\eta_\alpha^\delta\in-\frac{1}{\alpha}\,\partial\left(\frac{1}{p}\,\|\cdot\|^p\right)(A\,x_\alpha^\delta-y^\delta)
\quad\text{and}\quad A^\ast\,\eta_\alpha^\delta\in\partial\Omega(x_\alpha^\delta).
\end{equation*}
\end{lemma}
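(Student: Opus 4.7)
The plan is to apply the classical Fermat rule to the convex minimization problem \eqref{eq:tikh} and then split the subdifferential at a minimizer using the sum rule and the chain rule from convex analysis.

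First I would decompose the objective as $T_\alpha^\delta = F + \alpha\,\Omega$, where $F(x) := \tfrac{1}{p}\|Ax - y^\delta\|^p$. Both $F$ and $\Omega$ are proper convex; in particular $F$ is a composition of the continuous convex functional $\tfrac{1}{p}\|\cdot\|^p$ with the bounded linear operator $A$, so $F$ is continuous on all of $X$. Since $T_\alpha^\delta$ is convex, the Fermat rule gives: $x_\alpha^\delta$ is a minimizer if and only if $0 \in \partial T_\alpha^\delta(x_\alpha^\delta)$.

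Next I would use the Moreau--Rockafellar sum rule. Because $F$ is finite and continuous everywhere, the qualification condition $\operatorname{dom} F \cap \operatorname{int}(\operatorname{dom}\Omega) \neq \emptyset$ is trivially met (in fact $\operatorname{dom} F = X$), and hence
\begin{equation*}
\partial T_\alpha^\delta(x_\alpha^\delta) = \partial F(x_\alpha^\delta) + \alpha\,\partial\Omega(x_\alpha^\delta).
\end{equation*}
A second application of the chain rule to $F = \bigl(\tfrac{1}{p}\|\cdot\|^p\bigr)\circ (A\cdot - y^\delta)$, again justified by the continuity of $\tfrac{1}{p}\|\cdot\|^p$, yields
\begin{equation*}
\partial F(x_\alpha^\delta) = A^\ast\,\partial\!\left(\tfrac{1}{p}\,\|\cdot\|^p\right)\!(A\,x_\alpha^\delta - y^\delta).
\end{equation*}
Combining these, $0 \in \partial T_\alpha^\delta(x_\alpha^\delta)$ is equivalent to the existence of $\zeta \in \partial\bigl(\tfrac{1}{p}\|\cdot\|^p\bigr)(A x_\alpha^\delta - y^\delta)$ with $-A^\ast \zeta \in \alpha\,\partial\Omega(x_\alpha^\delta)$. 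Setting $\eta_\alpha^\delta := -\zeta/\alpha$ produces the two claimed inclusions, and conversely any such $\eta_\alpha^\delta$ reconstructs $0 \in \partial T_\alpha^\delta(x_\alpha^\delta)$. Since $T_\alpha^\delta$ is convex, this subdifferential inclusion is both necessary and sufficient for optimality.

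There is essentially no obstacle here; the statement is a textbook application of convex subdifferential calculus. The only point that deserves a brief mention is the verification of the qualification conditions for both the sum rule and the chain rule, and both are immediate from the fact that $\tfrac{1}{p}\|\cdot\|^p$ is continuous on all of $Y$ (so $F$ is continuous on all of $X$). Accordingly the proof can be written in just a few lines by invoking these calculus rules.
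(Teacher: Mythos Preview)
Your argument is correct and is precisely the elementary convex-analysis computation the paper intends; the paper itself does not write it out but simply refers to \cite[Corollary~2.25]{SchKalHofKaz12}. One minor slip: the Moreau--Rockafellar qualification condition should be that $F$ is continuous at some point of $\operatorname{dom}\Omega$ (equivalently, $\operatorname{int}(\operatorname{dom} F)\cap\operatorname{dom}\Omega\neq\emptyset$), not the version with $\operatorname{int}(\operatorname{dom}\Omega)$, which may well be empty---but your observation that $F$ is continuous on all of $X$ already verifies the correct form.
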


\begin{proof}
The proof is elementary convex analysis, see \cite[Corollary~2.25]{SchKalHofKaz12}.
\end{proof}

We use the abbreviation $\xi_\alpha^\delta:=A^\ast\,\eta_\alpha^\delta$ with $\eta_\alpha^\delta$ from the lemma throughout the article.
Note that
\begin{equation}\label{eq:normsub}
\partial\left(\frac{1}{p}\,\|\cdot\|^p\right)(y)=\bigl\{\eta\in Y^\ast:\la\eta,y\ra_{Y^\ast\times Y}=\|\eta\|\,\|y\|,\|\eta\|=\|y\|^{p-1}\bigr\}
\end{equation}
holds for all $y\in Y$, see \cite[Section~2.2.2]{SchKalHofKaz12}.

Both Bregman distances, the usual one and the skewed one, specialize to the squared norm distance in Hilbert spaces if $\Omega=\|\cdot\|^2$.
The advantage of the skewed version is that we do not need additional assumptions to guarantee its existence in case of Tikhonov regularization for linear equations~\eqref{eq:the}. If the operator $A$ would be replaced by a nonlinear mapping, then it is not clear whether each Tikhonov minimizer has a nonempty sub\-differential. Thus, the skewed Bregman distance might not be available, but the usual Bregman distance still works if there is an $\Omega$ minimizing solution with nonempty subdifferential.

We aim at asymptotic convergence speed estimates (convergence rates) of the form
\begin{equation*}
B^\Omega_{\xi_\alpha^\delta}(x^\dagger,x_\alpha^\delta)=\cO(\varphi(\delta)),\qquad\delta\to 0,
\end{equation*}
for some parameter choice $\alpha=\alpha(\delta,y^\delta)$.
The rate function $\varphi$ usually is an index function.

\begin{definition}
A function $\varphi:[0,\infty)\rightarrow[0,\infty)$ is an \emph{index function} if it is continuous, monotonically increasing, strictly increasing in a neighborhood of zero, and satisfies $\varphi(0)=0$.
\end{definition}

\begin{remark}
Simple calculations show that for \emph{concave} index functions $\varphi$ we have that
$t\mapsto\frac{\varphi(t)}{t}$, $t>0$, is monotonically decreasing and, as a consequence, that
$\varphi(c\,t)\leq c\,\varphi(t)$ for all $t\geq 0$ if $c\geq 1$.
Both properties will be used in subsequent proofs without further notice.
\end{remark}

The following theorem appeared in \cite[Theorem~3.1]{Kin16} for general non-norm fitting functionals, with a different a priori parameter choice and restricted to concave index functions $\varphi$.
The proof provided below is a simplified version of the original proof. This simplification stems from restricting our attention to Banach space norms as fitting functionals and from the simpler parameter choice rule we borrowed from \cite[Theorem~1]{HofMat12}.

\begin{theorem}\label{th:rate}
Let $x^\dagger$ be an $\Omega$ minimizing solution to \eqref{eq:the} and assume
\begin{equation}\label{eq:vsc}
0\leq\Omega(x)-\Omega(x^\dagger)+\varphi(\|A\,x-A\,x^\dagger\|)\qquad\text{for all $x\in X$}
\end{equation}
with an index function $\varphi$ for which $t\mapsto\frac{\varphi(t)}{t}$, $t>0$, is monotonically decreasing (e.\,g., $\varphi$ concave).
Then
\begin{equation*}
B^\Omega_{\xi_\alpha^\delta}(x^\dagger,x_\alpha^\delta)=\cO(\varphi(\delta)),\qquad\delta\to 0,
\end{equation*}
where $\alpha=\alpha(\delta)$ is chosen a priori such that
\begin{equation*}
c_1\,\frac{\delta^p}{\varphi(\delta)}\leq\alpha(\delta)\leq c_2\,\frac{\delta^p}{\varphi(\delta)}
\end{equation*}
with positive constants $c_1$, $c_2$.
\end{theorem}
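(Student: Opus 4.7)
The plan is to start from the definition of the skewed Bregman distance, unpack the subgradient $\xi_\alpha^\delta=A^\ast\eta_\alpha^\delta$ via Lemma~\ref{th:tikhopt} and the characterization \eqref{eq:normsub}, then feed in the variational source condition \eqref{eq:vsc} and finally close the loop with the a priori parameter choice. Throughout, let $r:=\|A\,x_\alpha^\delta-y^\delta\|$.

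First I would rewrite
\begin{equation*}
B^\Omega_{\xi_\alpha^\delta}(x^\dagger,x_\alpha^\delta)
=\Omega(x^\dagger)-\Omega(x_\alpha^\delta)+\la\eta_\alpha^\delta,A\,x_\alpha^\delta-y^\delta\ra+\la\eta_\alpha^\delta,y^\delta-A\,x^\dagger\ra
\end{equation*}
by inserting $\pm y^\delta$ in the duality pairing. The identity \eqref{eq:normsub} applied to $-\alpha\eta_\alpha^\delta\in\partial(\frac{1}{p}\|\cdot\|^p)(A\,x_\alpha^\delta-y^\delta)$ yields the two ingredients $\la\eta_\alpha^\delta,A\,x_\alpha^\delta-y^\delta\ra=-\frac{1}{\alpha}r^p$ and $\|\eta_\alpha^\delta\|=\frac{1}{\alpha}r^{p-1}$. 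Combining this with $\|y^\delta-A\,x^\dagger\|\leq\delta$ and the variational source condition, together with the triangle inequality $\|A\,x_\alpha^\delta-A\,x^\dagger\|\leq r+\delta$ and monotonicity of $\varphi$, gives the master estimate
\begin{equation*}
B^\Omega_{\xi_\alpha^\delta}(x^\dagger,x_\alpha^\delta)\leq \varphi(r+\delta)-\frac{1}{\alpha}r^p+\frac{1}{\alpha}r^{p-1}\delta.
\end{equation*}

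The main obstacle is then to show that the residual $r$ stays of order $\delta$. Comparing the Tikhonov functional at $x_\alpha^\delta$ with its value at $x^\dagger$ and using \eqref{eq:vsc} once more produces the self-referential inequality
\begin{equation*}
\tfrac{1}{p}r^p\leq\tfrac{1}{p}\delta^p+\alpha\,\varphi(r+\delta).
\end{equation*}
Splitting into the cases $r\leq\delta$ (where the claim is trivial) and $r>\delta$, I would use the property $\varphi(c\,t)\leq c\,\varphi(t)$ for $c\geq1$ to bound $\varphi(r+\delta)\leq2\varphi(r)$ and then $\varphi(r)\leq\varphi(\delta)\,r/\delta$ (because $t\mapsto\varphi(t)/t$ is decreasing). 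Inserting $\alpha\leq c_2\delta^p/\varphi(\delta)$ the $\varphi(\delta)$ factors cancel and one is left with $(r/\delta)^p\leq1+2pc_2(r/\delta)$, whence $r\leq K\delta$ for a constant $K$ depending only on $p$ and $c_2$.

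Finally I would plug $r\leq K\delta$ back into the master estimate: the negative term $-r^p/\alpha$ is dropped, $\varphi(r+\delta)\leq(K+1)\varphi(\delta)$ by the same scaling property, and $\frac{1}{\alpha}r^{p-1}\delta\leq c_1^{-1}K^{p-1}\varphi(\delta)$ by the lower bound $\alpha\geq c_1\delta^p/\varphi(\delta)$. Adding the three bounds yields $B^\Omega_{\xi_\alpha^\delta}(x^\dagger,x_\alpha^\delta)\leq C\,\varphi(\delta)$ with $C$ independent of $\delta$, which is precisely the announced rate.
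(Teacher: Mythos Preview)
Your proposal is correct and follows essentially the same route as the paper: rewrite the skewed Bregman distance via Lemma~\ref{th:tikhopt} and \eqref{eq:normsub}, apply the variational source condition \eqref{eq:vsc} at $x_\alpha^\delta$, use $T_\alpha^\delta(x_\alpha^\delta)\leq T_\alpha^\delta(x^\dagger)$ together with the decrease of $t\mapsto\varphi(t)/t$ to show the residual $r=\|A\,x_\alpha^\delta-y^\delta\|$ is $\cO(\delta)$, and conclude with the a priori parameter choice. The only organizational difference is that the paper keeps the two cases $r\leq\delta$ and $r>\delta$ separate throughout the final Bregman estimate, whereas you first extract a uniform bound $r\leq K\delta$ and then finish in one stroke; the constants differ slightly but the substance is identical.
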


\begin{proof}
Let $\eta_\alpha^\delta$ be as in Lemma~\ref{th:tikhopt} and set $\xi_\alpha^\delta:=A^\ast\,\eta_\alpha^\delta$.
Taking into account \eqref{eq:normsub} we have
\begin{align}
B^\Omega_{\xi_\alpha^\delta}(x^\dagger,x_\alpha^\delta)
&=\Omega(x^\dagger)-\Omega(x_\alpha^\delta)-\la\xi_\alpha^\delta,x^\dagger-x_\alpha^\delta\ra_{X^\ast\times X}\nonumber\\
&=\Omega(x^\dagger)-\Omega(x_\alpha^\delta)+\la\eta_\alpha^\delta,A\,x_\alpha^\delta-y^\delta\ra_{Y^\ast\times Y}+\la\eta_\alpha^\delta,y^\delta-y^\dagger\ra_{Y^\ast\times Y}\nonumber\\
&\leq\Omega(x^\dagger)-\Omega(x_\alpha^\delta)+\la\eta_\alpha^\delta,A\,x_\alpha^\delta-y^\delta\ra_{Y^\ast\times Y}+\|\eta_\alpha^\delta\|\,\delta\nonumber\\
&=\Omega(x^\dagger)-\Omega(x_\alpha^\delta)-\frac{1}{\alpha}\,\|A\,x_\alpha^\delta-y^\delta\|^p+\frac{\delta}{\alpha}\,\|A\,x_\alpha^\delta-y^\delta\|^{p-1}\label{eq:proof1}
\end{align}
If $\|A\,x_\alpha^\delta-y^\delta\|\leq\delta$, then \eqref{eq:proof1} and \eqref{eq:vsc} imply
\begin{align*}
B^\Omega_{\xi_\alpha^\delta}(x^\dagger,x_\alpha^\delta)
&\leq\varphi(\|A\,x_\alpha^\delta-A\,x^\dagger\|)+\frac{\delta}{\alpha}\,\|A\,x_\alpha^\delta-y^\delta\|^{p-1}\\
&\leq\varphi(\|A\,x_\alpha^\delta-y^\delta\|+\delta)+\frac{\delta^p}{\alpha}\\
&\leq\varphi(2\,\delta)+\frac{1}{c_1}\,\varphi(\delta)\\
&\leq\left(2+\frac{1}{c_1}\right)\,\varphi(\delta).
\end{align*}
If, on the other hand, $\|A\,x_\alpha^\delta-y^\delta\|>\delta$, then \eqref{eq:proof1} and $B^\Omega_{\xi_\alpha^\delta}(x^\dagger,x_\alpha^\delta)\geq 0$ imply
\begin{align*}
\|A\,x_\alpha^\delta-y^\delta\|^p
&\leq\alpha\,\Omega(x^\dagger)-\alpha\,\Omega(x_\alpha^\delta)+\|A\,x_\alpha^\delta-y^\delta\|^p\\
&=(1+p)\,\alpha\,(\Omega(x^\dagger)-\Omega(x_\alpha^\delta))+p\,T_\alpha^\delta(x_\alpha^\delta)-p\,\alpha\,\Omega(x^\dagger)\\
&\leq(1+p)\,\alpha\,(\Omega(x^\dagger)-\Omega(x_\alpha^\delta))+\delta^p,
\end{align*}
where we used $T_\alpha^\delta(x_\alpha^\delta)\leq T_\alpha^\delta(x^\dagger)$ in the last line.
From \eqref{eq:vsc} we now obtain
\begin{align*}
\|A\,x_\alpha^\delta-y^\delta\|^p
&\leq(1+p)\,\alpha\,\varphi(\|A\,x_\alpha^\delta-A\,x^\dagger\|)+\delta^p\\
&\leq(1+p)\,\alpha\,\frac{\varphi(\|A\,x_\alpha^\delta-y^\delta\|+\delta)}{\|A\,x_\alpha^\delta-y^\delta\|+\delta}\,(\|A\,x_\alpha^\delta-y^\delta\|+\delta)+\delta^p\\
&\leq(1+p)\,\alpha\,\frac{\varphi(\delta)}{\delta}\cdot 2\,\|A\,x_\alpha^\delta-y^\delta\|+\delta^{p-1}\,\|A\,x_\alpha^\delta-y^\delta\|.
\end{align*}
Thus,
\begin{equation*}
\|A\,x_\alpha^\delta-y^\delta\|^{p-1}
\leq 2\,(1+p)\,\alpha\,\frac{\varphi(\delta)}{\delta}+\delta^{p-1}
\leq(2\,(1+p)\,c_2+1)\,\delta^{p-1},
\end{equation*}
yielding
\begin{equation*}
\|A\,x_\alpha^\delta-y^\delta\|
\leq(2\,(1+p)\,c_2+1)^{\frac{1}{p-1}}\,\delta.
\end{equation*}
Applying \eqref{eq:vsc} and this estimate to \eqref{eq:proof1} we obtain
\begin{align*}
B^\Omega_{\xi_\alpha^\delta}(x^\dagger,x_\alpha^\delta)
&\leq\varphi(\|A\,x_\alpha^\delta-A\,x^\dagger\|)+\frac{\delta}{\alpha}\,\|A\,x_\alpha^\delta-y^\delta\|^{p-1}\\
&\leq\varphi(\|A\,x_\alpha^\delta-y^\delta\|+\delta)+(2\,(1+p)\,c_2+1)\,\frac{\delta^p}{\alpha}\\
&\leq\varphi\left(\left((2\,(1+p)\,c_2+1)^{\frac{1}{p-1}}+1\right)\,\delta\right)+\frac{2\,(1+p)\,c_2+1}{c_1}\,\varphi(\delta)\\
&\leq\left((2\,(1+p)\,c_2+1)^{\frac{1}{p-1}}+1+\frac{2\,(1+p)\,c_2+1}{c_1}\right)\,\varphi(\delta).
\end{align*}
This completes the case $\|A\,x_\alpha^\delta-y^\delta\|>\delta$ and the proof.
\end{proof}

In the proof we applied inequality \eqref{eq:vsc} to $x=x_\alpha^\delta$.
In the original proof it is applied to $x=x_\alpha^0$ only, which in some sense is more appealing. On the other hand, inequalities of the type \eqref{eq:vsc} usually hold on the hole space $X$.

Inequalities \eqref{eq:vsc} were introduced to convergence rate theory in \cite{HofKalPoeSch07} and are known as variational inequalities or variational source conditions.
Typically they have some error measure like norms or Bregman distances on the left-hand side instead of zero. But, as already known for linear $\varphi$ (see \cite[Proposition~12.25]{Fle12}) and shown in \cite{Kin16} for general $\varphi$, this is not mandatory for proving convergence rates.

Variational source conditions can be obtained from classical and general source conditions for linear problems in Hilbert spaces (see \cite[Section~3.2]{EngHanNeu96}, \cite{MatPer03}, \cite[Chapter~13]{Fle12}), as well as from problem specific calculations for certain nonlinear problems and Banach space settings (see \cite{BueFleHof16,FleGer17,HofKalPoeSch07,HohWei15,KoeHohWer16}).
Details on variational source conditions can be found in, e.\,g., \cite{BotHof10,Fle12,Gra10b,HohWer13}.

\begin{remark}
From \cite[Theorem~3.2]{Fle17} we know that variational source conditions are almost always available.
In particular, there is always a concave index function $\varphi$ such that \eqref{eq:vsc} holds.
\end{remark}

\section{Converse result}

We want to show that the convergence rate for the skewed Bregman distance in Theorem~\ref{th:rate} based on the variational source condition \eqref{eq:vsc} is optimal, that is, that there is a positive constant $c$ with
\begin{equation*}
c\,\varphi(\delta)\leq B^\Omega_{\xi_\alpha^\delta}(x^\dagger,x_\alpha^\delta)\qquad\text{for all $\delta>0$},
\end{equation*}
where $\alpha=\alpha(\delta)$ is chosen as in Theorem~\ref{th:rate}.
To achieve this aim we have to find the best possible index function $\varphi$ in \eqref{eq:vsc}.
For this purpose we apply the idea of approximate variational source conditions studied in \cite[Chapter~12]{Fle12}.

Define the function $D:[0,\infty)\rightarrow[0,\infty)$ by
\begin{equation}\label{eq:D}
D(r):=\sup_{x\in X}\left(\Omega(x^\dagger)-\Omega(x)-r\,\|A\,x-A\,x^\dagger\|\right),\qquad r\geq 0.
\end{equation}
This function is easily seen to be continuous, monotonically decreasing and convex.
Further, from \cite[proof of Theorem~3.2]{Fle17} we see
\begin{equation*}
\lim_{r\to\infty}D(r)=0.
\end{equation*}
The benchmark variational source condition
\begin{equation*}
0\leq\Omega(x)-\Omega(x^\dagger)+c\,\|A\,x-A\,x^\dagger\|\qquad\text{for all $x\in X$}
\end{equation*}
with some $c>0$ is satisfied if and only if $D(r)=0$ for some $r$.
Else we have $D(r)>0$ for all $r\geq 0$.

For later reference we define the auxiliary function $\Phi:(0,\infty)\rightarrow[0,\infty)$ by
\begin{equation}\label{eq:Phi}
\Phi(r):=\frac{D(r)}{r}.
\end{equation}
This function has a well-defined inverse $\Phi^{-1}:(0,\infty)\rightarrow(0,\infty)$ if $D(r)>0$ for all $r\geq 0$.

\begin{lemma}\label{th:Dvsc}
If $D(r)>0$ for all $r\geq 0$, then the variational source condition \eqref{eq:vsc} holds with
\begin{equation*}
\varphi(t)=\begin{cases}2\,D\left(\Phi^{-1}(t)\right),& t>0,\\0,&t=0,\end{cases}
\end{equation*}
and this function $\varphi$ is an index function for which $t\mapsto\frac{\varphi(t)}{t}$, $t>0$, is monotonically decreasing.
\end{lemma}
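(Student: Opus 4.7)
The plan is to exploit the defining supremum of $D$ at the optimal scale $r=\Phi^{-1}(t)$: by construction $D(r)=r\,t$ at that scale, so the two quantities $D(r)$ and $r\,t$ appearing in the inequality $\Omega(x^\dagger)-\Omega(x)-r\|A\,x-A\,x^\dagger\|\leq D(r)$ combine into the factor of $2$ in $\varphi$. First I would establish that $\Phi^{-1}$ is well defined. From continuity, convexity, positivity, and $\lim_{r\to\infty}D(r)=0$ it follows that $D$ is \emph{strictly} decreasing: if $D$ were constant on some interval $[r_1,r_2]$, the chord-slope inequality would give $D_+'(r_2)\geq 0$, and by convexity $D_+'\geq 0$ throughout $[r_2,\infty)$; combined with $D$ being monotonically decreasing this would force $D$ to be constant at the positive value $D(r_2)$ on $[r_2,\infty)$, contradicting the decay to zero. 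Strict monotonicity of $D$ then yields strict monotonicity of $\Phi(r)=D(r)/r$ via $D(r_2)/r_2<D(r_1)/r_2<D(r_1)/r_1$ for $r_1<r_2$. Continuity of $\Phi$ on $(0,\infty)$ is inherited from $D$, while $D(0)>0$ gives $\lim_{r\to 0^+}\Phi(r)=+\infty$ and $\lim_{r\to\infty}D(r)=0$ gives $\lim_{r\to\infty}\Phi(r)=0$. Hence $\Phi$ is a continuous strictly decreasing bijection of $(0,\infty)$ onto itself and admits a continuous strictly decreasing inverse.

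Next, for the variational source condition, fix $x\in X$ and set $t:=\|A\,x-A\,x^\dagger\|$. If $t=0$ then $x$ solves \eqref{eq:the} and the $\Omega$-minimality of $x^\dagger$ yields $\Omega(x)-\Omega(x^\dagger)\geq 0=\varphi(0)$. If $t>0$, choose $r:=\Phi^{-1}(t)$ so that $r\,t=D(r)$; the definition of $D$ then delivers
\[
\Omega(x^\dagger)-\Omega(x)-r\,t\leq D(r)=r\,t,
\]
i.e.\ $\Omega(x)-\Omega(x^\dagger)+2D(r)\geq 0$, which is precisely \eqref{eq:vsc} with $\varphi(t)=2D(\Phi^{-1}(t))$.

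The remaining structural properties of $\varphi$ then follow mechanically. Continuity and strict monotonicity of $\varphi$ on $(0,\infty)$ come from $\varphi=2D\circ\Phi^{-1}$ being the composition of two continuous strictly decreasing functions. Continuity at $t=0$ uses $\Phi^{-1}(t)\to\infty$ as $t\to 0^+$ together with $D(r)\to 0$ as $r\to\infty$. For the monotonicity of $t\mapsto\varphi(t)/t$, writing $r=\Phi^{-1}(t)$ and using $t=D(r)/r$ produces the clean identity
\[
\frac{\varphi(t)}{t}=\frac{2D(r)}{D(r)/r}=2\,\Phi^{-1}(t),
\]
which is strictly decreasing in $t$. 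The only real obstacle is the strict monotonicity of $D$; once that provides the continuous inverse of $\Phi$, everything else reduces to direct substitution at the optimal scale.
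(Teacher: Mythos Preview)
Your proof is correct and follows essentially the same route as the paper: choose the scale $r=\Phi^{-1}(t)$ in the defining inequality for $D$ to get the factor~$2$, and then observe $\varphi(t)/t=2\Phi^{-1}(t)$ for the monotonicity claim. You add rigor the paper omits---the strict monotonicity of $D$ that justifies the existence of $\Phi^{-1}$, and the separate treatment of the boundary case $t=0$---and your factor $2$ in $\varphi(t)/t=2\Phi^{-1}(t)$ is indeed the correct constant (the paper's displayed computation silently drops it, which is harmless for the monotonicity conclusion).
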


\begin{proof}
From the definition \eqref{eq:D} of $D$ we obtain
\begin{equation*}
0\leq\Omega(x)-\Omega(x^\dagger)+r\,\|A\,x-A\,x^\dagger\|+D(r)\qquad\text{for all $r\geq 0$.}
\end{equation*}
With $r=\Phi^{-1}(\|A\,x-A\,x^\dagger\|)$, that is $\|A\,x-A\,x^\dagger\|=\frac{D(r)}{r}$, this becomes
\begin{equation*}
0\leq\Omega(x)-\Omega(x^\dagger)+2\,D\bigl(\Phi^{-1}(\|A\,x-A\,x^\dagger\|)\bigr).
\end{equation*}
\par
That $\varphi$ is an index function can be easily deduced from the definition of $\Phi$ and from the properties of $D$.
From
\begin{equation*}
\frac{\varphi(t)}{t}=\frac{D(\Phi^{-1}(t))}{t}=\frac{D(\Phi^{-1}(t))}{\Phi^{-1}(t)}\,\frac{\Phi^{-1}(t)}{t}=\Phi(\Phi^{-1}(t))\,\frac{\Phi^{-1}(t)}{t}=\Phi^{-1}(t)
\end{equation*}
we see that $t\mapsto\frac{\varphi(t)}{t}$ is decreasing.
\end{proof}

\begin{theorem}\label{th:converse}
Let $D$ and $\Phi$ be defined by \eqref{eq:D} and \eqref{eq:Phi}.
If $D(r)>0$ for all $r\geq 0$, then there is a constant $c>0$ such that
\begin{equation*}
D\left(c\,\Phi^{-1}(\delta)\right)\leq B^\Omega_{\xi_\alpha^\delta}(x^\dagger,x_\alpha^\delta)\qquad\text{for all $\delta>0$},
\end{equation*}
where $\alpha=\alpha(\delta)$ is chosen a priori such that
\begin{equation*}
c_1\,\frac{\delta^p}{D(\Phi^{-1}(\delta))}\leq\alpha(\delta)\leq c_2\,\frac{\delta^p}{D(\Phi^{-1}(\delta))}
\end{equation*}
with positive constants $c_1$, $c_2$.
\end{theorem}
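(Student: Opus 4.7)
The plan is to establish the key lower bound
\[
B^\Omega_{\xi_\alpha^\delta}(x^\dagger,x_\alpha^\delta)\geq D(\|\eta_\alpha^\delta\|),
\]
which mirrors the definition of $D$, and then to estimate $\|\eta_\alpha^\delta\|$ from above in terms of $\Phi^{-1}(\delta)$ by combining the optimality system in Lemma~\ref{th:tikhopt} with the residual bound implicit in the proof of Theorem~\ref{th:rate}.

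For the first step I would start from
\[
B^\Omega_{\xi_\alpha^\delta}(x^\dagger,x_\alpha^\delta)=\Omega(x^\dagger)-\Omega(x_\alpha^\delta)+\la\eta_\alpha^\delta,A\,x_\alpha^\delta-A\,x^\dagger\ra_{Y^\ast\times Y},
\]
obtained by inserting $\xi_\alpha^\delta=A^\ast\eta_\alpha^\delta$ into the definition of the Bregman distance. The subgradient inequality $\Omega(x_\alpha^\delta)\leq\Omega(x)-\la\eta_\alpha^\delta,A\,x-A\,x_\alpha^\delta\ra$, valid for every $x\in X$ because $A^\ast\eta_\alpha^\delta\in\partial\Omega(x_\alpha^\delta)$, then gives
\[
B^\Omega_{\xi_\alpha^\delta}(x^\dagger,x_\alpha^\delta)\geq\Omega(x^\dagger)-\Omega(x)+\la\eta_\alpha^\delta,A\,x-A\,x^\dagger\ra\geq\Omega(x^\dagger)-\Omega(x)-\|\eta_\alpha^\delta\|\,\|A\,x-A\,x^\dagger\|
\]
for every $x$, so taking the supremum on the right recovers exactly $D(\|\eta_\alpha^\delta\|)$.

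For the second step, Lemma~\ref{th:tikhopt} together with \eqref{eq:normsub} yields the identity $\alpha\,\|\eta_\alpha^\delta\|=\|A\,x_\alpha^\delta-y^\delta\|^{p-1}$. By Lemma~\ref{th:Dvsc}, the variational source condition \eqref{eq:vsc} holds with $\varphi(t)=2\,D(\Phi^{-1}(t))$, and the parameter choice posited here differs from the one in Theorem~\ref{th:rate} only by a factor of two in the constants. Inspecting that proof one sees that in both of its case distinctions the residual satisfies $\|A\,x_\alpha^\delta-y^\delta\|\leq C\,\delta$ with a constant $C$ depending only on $p$ and $c_2$. Combined with $\alpha\geq c_1\,\delta^p/D(\Phi^{-1}(\delta))$ and the elementary identity $D(\Phi^{-1}(\delta))=\delta\,\Phi^{-1}(\delta)$ (which follows from $\Phi(r)=D(r)/r$), this produces
\[
\|\eta_\alpha^\delta\|=\frac{\|A\,x_\alpha^\delta-y^\delta\|^{p-1}}{\alpha}\leq\frac{C^{p-1}}{c_1}\,\frac{D(\Phi^{-1}(\delta))}{\delta}=\frac{C^{p-1}}{c_1}\,\Phi^{-1}(\delta).
\]

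Setting $c:=C^{p-1}/c_1$ and invoking the monotonicity of $D$ then gives $D(c\,\Phi^{-1}(\delta))\leq D(\|\eta_\alpha^\delta\|)\leq B^\Omega_{\xi_\alpha^\delta}(x^\dagger,x_\alpha^\delta)$, which is the claim. The main obstacle is really the opening inequality: one has to recognize that the Bregman distance admits a natural lower bound of the form $D(\|\eta_\alpha^\delta\|)$, dual to the upper bound driving Theorem~\ref{th:rate}. Once this dual interpretation is spotted, the rest of the proof reduces to importing the residual estimate from the direct theorem and a one-line substitution.
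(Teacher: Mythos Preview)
Your proposal is correct and follows essentially the same route as the paper: first the inequality $B^\Omega_{\xi_\alpha^\delta}(x^\dagger,x_\alpha^\delta)\geq D(\|\eta_\alpha^\delta\|)$ via the subgradient property of $A^\ast\eta_\alpha^\delta$, then an upper bound $\|\eta_\alpha^\delta\|\leq c\,\Phi^{-1}(\delta)$ combined with the monotonicity of $D$. The only cosmetic difference is that for the second step the paper reproduces the two-case residual estimate in place (obtaining $c=\tfrac{1}{c_1}+4p$), whereas you import the bound $\|A\,x_\alpha^\delta-y^\delta\|\leq C\,\delta$ from the proof of Theorem~\ref{th:rate} and then divide by $\alpha$; both give the same conclusion with slightly different explicit constants.
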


\begin{proof}
With $\eta_\alpha^\delta$ as in Lemma~\ref{th:tikhopt} we have
\begin{align*}
D(\|\eta_\alpha^\delta\|)
&=\sup_{x\in X}\bigl(\Omega(x^\dagger)-\Omega(x)-\|\eta_\alpha^\delta\|\,\|A\,x-A\,x^\dagger\|\bigr)\\
&\leq\sup_{x\in X}\bigl(\Omega(x^\dagger)-\Omega(x)+\la\eta_\alpha^\delta,A\,x-A\,x^\dagger\ra_{Y^\ast\times Y}\bigr)\\
&=\sup_{x\in X}\bigl(\Omega(x^\dagger)-\Omega(x)-\la A^\ast\,\eta_\alpha^\delta,x^\dagger-x\ra_{X^\ast\times X}\bigr)
\end{align*}
and the supremum is attained at $x_\alpha^\delta$, because $A^\ast\,\eta_\alpha^\delta\in\partial\Omega(x_\alpha^\delta)$.
Thus,
\begin{equation*}
D(\|\eta_\alpha^\delta\|)
\leq\Omega(x^\dagger)-\Omega(x_\alpha^\delta)-\la A^\ast\,\eta_\alpha^\delta,x^\dagger-x_\alpha^\delta\ra_{X^\ast\times X}
=B^\Omega_{\xi_\alpha^\delta}(x^\dagger,x_\alpha^\delta).
\end{equation*}
\par
It remains to show $\|\eta_\alpha^\delta\|\leq c\,\Phi^{-1}(\delta)$. Then the monotonicity of $D$ implies the asserted lower bound for the Bregman distance.
\par
If $\|A\,x_\alpha^\delta-y^\delta\|\leq\delta$, then by \eqref{eq:normsub} we have
\begin{equation*}
\|\eta_\alpha^\delta\|
=\frac{1}{\alpha}\,\|A\,x_\alpha^\delta-y^\delta\|^{p-1}
\leq\frac{\delta^{p-1}}{\alpha}
\leq\frac{1}{c_1}\,\frac{D(\Phi^{-1}(\delta))}{\delta}
=\frac{1}{c_1}\,\Phi^{-1}(\delta).
\end{equation*}
If, on the other hand, $\|A\,x_\alpha^\delta-y^\delta\|>\delta$, then by the minimizing property of $x_\alpha^\delta$ and by Lemma~\ref{th:Dvsc} we have
\begin{align*}
\frac{1}{p}\,\|A\,x_\alpha^\delta-y^\delta\|^p
&\leq\frac{1}{p}\,\delta^p+\alpha\,\Omega(x^\dagger)-\alpha\,\Omega(x_\alpha^\delta)\\
&\leq\frac{1}{p}\,\delta^p+2\,\alpha\,D\left(\Phi^{-1}(\|A\,x_\alpha^\delta-A\,x^\dagger\|)\right)\\
&\leq\frac{1}{p}\,\delta^p+2\,\alpha\frac{D\left(\Phi^{-1}(\|A\,x_\alpha^\delta-y^\delta\|+\delta)\right)}{\|A\,x_\alpha^\delta-y^\delta\|+\delta}\,(\|A\,x_\alpha^\delta-y^\delta\|+\delta)\\
&\leq\frac{1}{p}\,\delta^{p-1}\,\|A\,x_\alpha^\delta-y^\delta\|+2\,\alpha\,\frac{D\left(\Phi^{-1}(\delta)\right)}{\delta}\cdot 2\,\|A\,x_\alpha^\delta-y^\delta\|.
\end{align*}
Consequently,
\begin{equation*}
\|A\,x_\alpha^\delta-y^\delta\|^{p-1}
\leq\delta^{p-1}+4\,p\,\alpha\,\frac{D\left(\Phi^{-1}(\delta)\right)}{\delta}
\end{equation*}
and therefore
\begin{align*}
\|\eta_\alpha^\delta\|
&=\frac{1}{\alpha}\,\|A\,x_\alpha^\delta-y^\delta\|^{p-1}\\
&\leq\frac{\delta^{p-1}}{\alpha}+4\,p\,\frac{D\left(\Phi^{-1}(\delta)\right)}{\delta}\\
&\leq\left(\frac{1}{c_1}+4\,p\right)\,\frac{D\left(\Phi^{-1}(\delta)\right)}{\delta}\\
&=\left(\frac{1}{c_1}+4\,p\right)\,\Phi^{-1}(\delta).
\end{align*}
This proves the theorem with $c=\frac{1}{c_1}+4\,p$.
\end{proof}

Theorem~\ref{th:rate} and Lemma~\ref{th:Dvsc} yield the convergence rate
\begin{equation*}
B^\Omega_{\xi_\alpha^\delta}(x^\dagger,x_\alpha^\delta)=\cO\left(D\left(\Phi^{-1}(\delta)\right)\right),\qquad\delta\to 0.
\end{equation*}
From Theorem~\ref{th:converse} we obtain
\begin{equation*}
D\left(c\,\Phi^{-1}(\delta)\right)\leq B^\Omega_{\xi_\alpha^\delta}(x^\dagger,x_\alpha^\delta)\qquad\text{for all $\delta>0$}
\end{equation*}
with some constant $c$ using the same parameter choice as in Theorem~\ref{th:rate}.
The constant $c$ in the lower bound can be removed in many situations.

\begin{remark}
From Theorem~\ref{th:converse} we obtain the desired lower bound
\begin{equation*}
\tilde{c}\,D\left(\Phi^{-1}(\delta)\right)\leq B^\Omega_{\xi_\alpha^\delta}(x^\dagger,x_\alpha^\delta)\qquad\text{for all $\delta>0$}
\end{equation*}
with some $\tilde{c}>0$ if $D$ does not decay too fast, that is, if
\begin{equation*}
\tilde{c}\,D\left(\Phi^{-1}(\delta)\right)\leq D\left(c\,\Phi^{-1}(\delta)\right)
\end{equation*}
with $c$ from the theorem.
Examples for the existence of such $\tilde{c}$ are the cases
\begin{equation*}
D(r)\sim r^{-a}\qquad\text{and}\qquad D(r)\sim(\ln r)^{-a}
\end{equation*}
with $a>0$.
A counter example is $D(r)\sim \exp(-r)$.
\end{remark}

\section{Concluding remarks}

Optimality of convergence rates has been considered for ill-posed linear problems in Hilbert spaces several years ago.
The first result in this direction was \cite{Neu97} based on source conditions, followed by \cite{FleHofMat11} emphasizing the tight connections between convergence rates and distance functions of approximate source conditions.
In \cite[Chapter~13]{Fle12} converse results for variational source conditions in Hilbert spaces were presented, partly rediscovered and partly extended by \cite{AlbElbDehSch16,HohWei17}.

Theorem~\ref{th:converse} is, to the author's best knowledge, the first converse result for convergence rates in Banach spaces.
The proof technique is very similar to the one used in \cite{FleHofMat11} and \cite[Section~13.4]{Fle12}.
The important idea is how to ged rid of spectral theory.
Looking at the proof of Theorem~\ref{th:converse} we see that the second part, that is, estimating $\|\eta_\alpha^\delta\|$ does not require linearity of $A$ and, thus, is applicable to nonlinear inverse problems, too.

Restriction to linear mappings stems from using skewed Bregman distances. As already noted in Section~\ref{sc:rates}, for nonlinear mappings we cannot ensure that the required subgradient in the definition of the skewed Bregman distance exists.
For the some reason it is pointless to search for variational source conditions with a skewed Bregman distance as error measure on the left-hand side. This would only be possible if we could guarantee $\partial\Omega(x)\neq\emptyset$ for all $x\in X$.

Despite the mentioned drawbacks of skewed Bregman distances, they are, up to now, the only error measures allowing for converse results in Banach spaces. This odd situation either is a fortunate coincidence or the starting point for a new understanding of variational source conditions.
The author favors the second position, because one can prove the intimate connection
\begin{equation*}
D(\|\eta_\alpha^0\|)=B^\Omega_{\xi_\alpha^0}(x^\dagger,x_\alpha^0)
\end{equation*}
between variational source conditions and skewed Bregman distances.

Variational source conditions proved to be a very powerful tool for convergence rate analysis of linear and nonlinear inverse problems in Banach space. But up to now the following important questions have not been answered:
\begin{itemize}
\item
How to obtain higher-order rates from variational source conditions? Classical source conditions in Hilbert spaces yield convergence rates for Tikhonov regularization up to $\|x_\alpha^\delta-x^\dagger\|=\cO(\delta^{\frac{2}{3}})$. Via variational source conditions only rates up to $\|x_\alpha^\delta-x^\dagger\|=\cO(\sqrt{\delta})$ can be obtained. For linear problems in Hilbert or Banach spaces workarounds were suggested in \cite[Section~13.1.3]{Fle12} and \cite{Gra13,SprHoh17}, but those approaches do not allow generalization to nonlinear mappings.
\item
How to handle oversmoothing? Situations in which the regularized solutions are smoother than the exact solutions, that is, $\Omega(x_\alpha^\delta)<\infty$, but $\Omega(x^\dagger)=\infty$, cannot be handled by variational source conditions up to now.
We refer to \cite{HofMat17} for details and references.
\item
How to prove converse results with different error measures? As mentioned above, there are no converse results for convergence rates in Banach spaces and it is unclear how to find such results for variational source conditions with norms or non-skewed Bregman distances as error measure and with nonlinear mappings.
\end{itemize}


\bibliography{vsc}

\begin{thebibliography}{10}

\bibitem{AlbElbDehSch16}
V.~Albani, P.~Elbau, M.~V. de~Hoop, and O.~Scherzer.
\newblock {Optimal convergence rates results for linear inverse problems in
  Hilbert spaces}.
\newblock {\em Numerical Functional Analysis and Optimization}, 37(5):521--540,
  2016.

\bibitem{BotHof10}
R.~I. Bo{\c{t}} and B.~Hofmann.
\newblock {An extension of the variational inequality approach for nonlinear
  ill-posed problems}.
\newblock {\em Journal of Integral Equations and Applications}, 22(3):369--392,
  2010.

\bibitem{BueFleHof16}
S.~B{\"u}rger, J.~Flemming, and B.~Hofmann.
\newblock {On complex-valued deautoconvolution of compactly supported functions
  with sparse Fourier representation}.
\newblock {\em Inverse Problems}, 32(10):104006 (12pp), 2016.

\bibitem{EngHanNeu96}
H.~W. Engl, M.~Hanke, and A.~Neubauer.
\newblock {\em {Regularization of Inverse Problems}}.
\newblock Mathematics and Its Applications. Kluwer Academic Publishers,
  Dordrecht, 1996.

\bibitem{Fle12}
J.~Flemming.
\newblock {\em {Generalized Tikhonov regularization and modern convergence rate
  theory in Banach spaces}}.
\newblock Shaker Verlag, Aachen, 2012.

\bibitem{Fle17}
J.~Flemming.
\newblock {Existence of variational source conditions for nonlinear inverse
  problems in Banach spaces}.
\newblock {\em Journal of Inverse and Ill-Posed Problems}, 2017.
\newblock Ahead of print, available online, DOI: 10.1515/jiip-2017-0092.

\bibitem{FleGer17}
J.~Flemming and D.~Gerth.
\newblock {Injectivity and weak*-to-weak continuity suffice for convergence
  rates in $\ell^1$-regularization}.
\newblock {\em Journal of Inverse and Ill-Posed Problems}, 2017.
\newblock Ahead of print, available online, DOI: 10.1515/jiip-2017-0008.

\bibitem{FleHofMat11}
J.~Flemming, B.~Hofmann, and P.~Math{\'e}.
\newblock {Sharp converse results for the regularization error using distance
  functions}.
\newblock {\em Inverse Problems}, 27(2):025006 (18pp), 2011.

\bibitem{Gra10b}
M.~Grasmair.
\newblock {Generalized Bregman distances and convergence rates for non-convex
  regularization methods}.
\newblock {\em Inverse Problems}, 26(11):115014 (16pp), 2010.

\bibitem{Gra13}
M.~Grasmair.
\newblock {Variational inequalities and higher order convergence rates for
  Tikhonov regularisation on Banach spaces}.
\newblock {\em Journal of Inverse and Ill-Posed Problems}, 21(3):379--394,
  2013.

\bibitem{HofKalPoeSch07}
B.~Hofmann, B.~Kaltenbacher, C.~P{\"o}schl, and O.~Scherzer.
\newblock {A convergence rates result for Tikhonov regularization in Banach
  spaces with non-smooth operators}.
\newblock {\em Inverse Problems}, 23(3):987--1010, 2007.

\bibitem{HofMat12}
B.~Hofmann and P.~Math{\'e}.
\newblock {Parameter choice in Banach space regularization under variational
  inequalities}.
\newblock {\em Inverse Problems}, 28:104006 (17pp), 2012.

\bibitem{HofMat17}
B.~Hofmann and P.~Math{\'e}.
\newblock {Tikhonov regularization with oversmoothing penalty for non-linear
  ill-posed problems in Hilbert scales}.
\newblock {\em arXiv.org}, arXiv:1705.03289 [math.NA], November 2017.
\newblock https://arxiv.org/abs/1705.03289.

\bibitem{HohWei15}
T.~Hohage and F.~Weidling.
\newblock Verification of a variational source condition for acoustic inverse
  medium scattering problems.
\newblock {\em Inverse Problems}, 31(7):075006 (14pp), 2015.

\bibitem{HohWei17}
T.~Hohage and F.~Weidling.
\newblock Characterizations of variational source conditions, converse results,
  and maxisets of spectral regularization methods.
\newblock {\em SIAM Journal on Numerical Analysis}, 55(2):598--620, 2017.

\bibitem{HohWer13}
T.~Hohage and F.~Werner.
\newblock {Iteratively regularized Newton methods with general data misfit
  functionals and applications to Poisson data}.
\newblock {\em Numerische Mathematik}, 123(4):745--779, 2013.

\bibitem{Kin16}
S.~Kindermann.
\newblock {Convex Tikhonov regularization in Banach spaces: New results on
  convergence rates}.
\newblock {\em Journal of Inverse and Ill-Posed Problems}, 24(3):341--350,
  2016.

\bibitem{KoeHohWer16}
C.~K{\"o}nig, T.~Hohage, and F.~Werner.
\newblock {Convergence rates for exponentially ill-posed inverse problems with
  impulsive noise}.
\newblock {\em SIAM Journal on Numerical Analysis}, 54(1):341--360, 2016.

\bibitem{MatPer03}
P.~Math{\'{e}} and S.~V. Pereverzev.
\newblock {Geometry of linear ill-posed problems in variable Hilbert scales}.
\newblock {\em Inverse Problems}, 19(3):789--803, 2003.

\bibitem{Neu97}
A.~Neubauer.
\newblock On converse and saturation results for {T}ikhonov regularization of
  linear ill-posed problems.
\newblock {\em SIAM Journal on Numerical Analysis}, 34(2):517--527, 1997.

\bibitem{SchGraGroHalLen09}
O.~Scherzer, M.~Grasmair, H.~Grossauer, M.~Haltmeier, and F.~Lenzen.
\newblock {\em {Variational Methods in Imaging}}.
\newblock Number 167 in Applied Mathematical Sciences. Springer, New York,
  2009.

\bibitem{SchKalHofKaz12}
T.~Schuster, B.~Kaltenbacher, B.~Hofmann, and K.~S. Kazimierski.
\newblock {\em {Regularization Methods in Banach Spaces}}, volume~10 of {\em
  Radon Series on Computational and Applied Mathematics}.
\newblock De Gruyter, Berlin/Boston, 2012.

\bibitem{SprHoh17}
B.~Sprung and T.~Hohage.
\newblock {Higher order convergence rates for Bregman iterated variational
  regularization of inverse problems}.
\newblock {\em arXiv.org}, arXiv:1710.09244 [math.NA], October 2017.
\newblock https://arxiv.org/abs/1710.09244.

\end{thebibliography}

\end{document}